\title{On power values of pyramidal numbers, II }
\author{Andrej Dujella, K\'alm\'an Gy\H{o}ry, Philippe Michaud-Jacobs,  \\ and \'Akos Pint\'er}
\newcommand\notsotiny{\@setfontsize\notsotiny\@vipt\@viipt}
\newtheorem{theorem}{Theorem}[section]
\newtheorem*{theorem*}{Theorem}
\newtheorem {lemma}[theorem]{Lemma}
\newtheorem {proposition}[theorem]{Proposition}
\theoremstyle{definition}
\theoremstyle{remark}
\newtheorem*{remark*}{Remark}
\apptocmd{\sloppy}{\hbadness 10000\relax}{}{}
\newcommand{\ord}{\operatorname{ord}}
\renewcommand{\arraystretch}{2.0}
\newcommand{\Addresses}{{
  \bigskip
  \footnotesize

 \textsc{Department of Mathematics, Faculty of Science, University of Zagreb, 
 Bijeni\v{c}ka cesta 30, 10000 Zagreb, Croatia }\par\nopagebreak
  \textit{E-mail address}: \texttt{duje@math.hr}
\bigskip

 \textsc{Institute of Mathematics, University of Debrecen, H-4002 Debrecen, P.O.B. 400, Hungary}\par\nopagebreak
  \textit{E-mail address}: \texttt{gyory@science.unideb.hu}
\bigskip

 \textsc{Mathematics Institute, University of Warwick, CV4 7AL, United Kingdom}\par\nopagebreak
  \textit{E-mail address}: \texttt{p.rodgers@warwick.ac.uk} \bigskip

\textsc{Institute of Mathematics, University of Debrecen, H-4002 Debrecen, P.O.B. 400 and MTA-DE Equations,
Functions and Curves Research Group, Eötvös Loránd Research Network (ELKH), Hungary}\par\nopagebreak
  \textit{E-mail address}: \texttt{apinter@science.unideb.hu}

}}
\let\svthefootnote\thefootnote
\newcommand\freefootnote[1]{%
  \let\thefootnote\relax%
  \footnotetext{#1}%
  \let\thefootnote\svthefootnote%
}
\date{\vspace{-3ex}}
\begin{document}

\maketitle

\begin{center}\emph{Dedicated to the memory of Professor Andrzej Schinzel} \end{center}
 
\begin{abstract}
For $m \geq 3$, we define the $m$th order pyramidal number by \[ \mathrm{Pyr}_m(x) = \frac{1}{6} x(x+1)((m-2)x+5-m). \] In a previous paper, written by the first-, second-, and fourth-named authors, all solutions to the equation $\mathrm{Pyr}_m(x) = y^2$ are found in positive integers $x$ and $y$, for $6 \leq m \leq 100$. In this paper, we consider the question of higher powers, and find all solutions to the equation $\mathrm{Pyr}_m(x) = y^n$ in positive integers $x$, $y$, and  $n$, with $n \geq 3$, and $5 \leq m \leq 50$. We reduce the problem to a study of systems of binomial Thue equations, and use a combination of local arguments, the modular method via Frey curves, and bounds arising from linear forms in logarithms to solve the problem.
\end{abstract}

\section{Introduction} 

For $m \geq 3$, we define the $m$th order pyramidal number by \[ \mathrm{Pyr}_m(x) = \frac{1}{6} x(x+1)((m-2)x+5-m). \] Pyramidal numbers are a special type of figurate number with many interesting properties and a rich history. The properties of figurate numbers, and in particular their relationship with perfect powers, have received much attention in the literature, see \citep{part1, poly, decagonal} for example, and the references therein. \freefootnote{\emph{Date}: \date{\today}.}
\freefootnote{\emph{Keywords}: Pyramidal, perfect power, binomial Thue equations, modular method, Frey curve, level-lowering.}
\freefootnote{\emph{MSC2010}: 11D41, 11D59, 11D61, 14G99.}
\freefootnote{A. D. is supported by the Croatian Science Foundation under the project no. IP-2018-011313
 and the QuantiXLie Center of Excellence, a project co-financed by the Croatian Government and 
 European Union through the European Regional Development Fund - the Competitiveness and Cohesion 
 Operational Programme (Grant KK.01.1.1.01.0004). }
\freefootnote{K. Gy. is supported in part by Grants K115479 and K128088 from 
 the Hungarian National Foundation for Scientific Research (OTKA) and from the Austrian-Hungarian 
joint project ANN 130909 (FWF-NKFIH).}
\freefootnote{P. M-J. is supported by an EPSRC studentship and has previously used the name Philippe Michaud-Rodgers.}
\freefootnote{\'A. P. is supported by Grants K115479 and K128088 from 
 the Hungarian National Foundation for Scientific Research (OTKA) and from the Austrian-Hungarian 
joint project ANN 130909 (FWF-NKFIH) and E\"otv\"os Lor\'and Research Network (ELKH).}

This paper is the second in a series of two papers. In the first paper \citep{part1}, written by the first-, second-, and fourth-named authors in 2012, the question of when $\mathrm{Pyr}_m(x)$ is a square is considered, and all solutions are found with $3 \leq m \leq 100$ and $m \ne 5$ (the cases $m = 3$ and $m=4$ are classical, and there are infinitely many solutions in the case $m=5$). In this paper, we consider the case of higher powers and obtain the following result.

\begingroup
\renewcommand\thetheorem{1}
\begin{theorem}\label{Mainthm} All the solutions of the equation
\begin{equation}\label{maineq} \mathrm{Pyr}_m(x)=\frac{1}{6}x(x+1)((m-2)x+5-m)=y^n \end{equation}
in positive integers $m,x,y,n$  with $y>1, \, 3\leq m\leq 50$ and $n \geq 3$ are 
\begin{align*} (m,x,y,n)= &  (5,57121,3107,4), \; (7,2,2,3), \;  (15,2,2,4), \; (17,8,6,4), \\ & (26,2,3,3), \; (31,2,2,5) \; \text{ and } \; (50,15,30,3). \end{align*}
\end{theorem}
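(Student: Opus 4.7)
The plan is to reduce equation \eqref{maineq} to finitely many systems of binomial Thue equations and solve each by combining local obstructions, the modular method via Frey curves, and bounds from linear forms in logarithms. Since any $n$-th power is a $p$-th power for every prime $p \mid n$, it suffices to treat $n = p$ prime. The case $p = 2$ is settled by \citep{part1}. The cases $m = 3$ (tetrahedral numbers, $\binom{x+2}{3} = y^p$) and $m = 4$ (square pyramidal numbers) are classical and can be quoted from the literature, and for $m = 5$ the equation becomes $x^2(x+1) = 2 y^p$, which, because $\gcd(x, x+1) = 1$, splits into a handful of cases solvable by standard methods. The substantive work will be the range $6 \leq m \leq 50$ for odd primes $p$.

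For these cases, write $6 y^p = x(x+1)((m-2)x + 5 - m)$. Using $\gcd(x, x+1) = 1$, $\gcd(x, (m-2)x + 5 - m) \mid m - 5$, and $\gcd(x+1, (m-2)x + 5 - m) \mid 2m - 7$, the three factors on the right are nearly pairwise coprime, with bounded common divisors. Distributing the primes of $6$ and of these common divisors across the three factors in all admissible ways gives an explicit finite list of triples $(d_1, d_2, d_3)$ for which $x = d_1 u^p$, $x + 1 = d_2 v^p$, and $(m-2)x + 5 - m = d_3 w^p$. Eliminating $x$ between pairs then yields the binomial Thue equations $d_2 v^p - d_1 u^p = 1$ and $d_3 w^p - (m-2) d_1 u^p = 5 - m$. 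For each fixed $m$, each admissible triple, and each odd prime $p$, we solve this system.

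The attack proceeds in three stages. First, congruence sieves modulo small primes $\ell$ (often with $\ell \equiv 1 \pmod p$) eliminate many of the admissible triples at once. For the survivors, we attach a Frey elliptic curve (typically with full rational $2$-torsion) to a hypothetical solution of $d_2 v^p - d_1 u^p = 1$, apply modularity and Ribet's level-lowering to obtain an associated newform of small level, and derive a contradiction either from the non-existence of such a newform, from incompatible Hecke eigenvalues computed at auxiliary primes, or from an image-of-inertia argument at the bad primes; this handles all $p$ above an explicit bound depending on $m$ and the chosen triple. The remaining small primes, chiefly $p = 3$ and occasionally $p = 5, 7$, are dealt with by obtaining explicit upper bounds on $\max(|u|, |v|, |w|)$ from Baker-type estimates on linear forms in complex and $p$-adic logarithms, followed by a computer-assisted finite search.

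The main obstacle will be the exponent $p = 3$, where the Frey-curve method is weakest and Baker bounds are large, so that for a number of values of $m$ one is forced to work directly with the associated cubic Thue or hyperelliptic equation: computing the Mordell--Weil rank of the relevant elliptic curve by descent and then applying elliptic Chabauty or a Mordell--Weil sieve. A separate delicate point is the $n = 4$ solutions in the statement: these correspond to squares whose square-roots are themselves squares, which one extracts either from the Pell family of square solutions for $m = 5$ (yielding the solution with $x = 57121$) or from the sporadic square solutions listed in \citep{part1} for $m = 15$ and $m = 17$. Assembling these ingredients across all admissible $(m, p)$ produces the seven tuples in the theorem.
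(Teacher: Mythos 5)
Your outline follows essentially the same route as the paper: reduce to $n=4$ (read off from the square solutions of \citep{part1}) and to odd primes $p$, treat $m=5$ separately, split $x(x+1)(a_mx-b_m)$ into three almost-coprime factors using $d_1\mid m-5$ and $d_2\mid 2m-7$, form systems of two binomial Thue equations for each admissible distribution of prime factors, bound $p$ by linear forms in logarithms, and finish with local sieves modulo $\ell\equiv 1\pmod p$, direct Thue solving for small $p$, and Frey curves with level-lowering.

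There is, however, one concrete obstacle that your plan does not address and on which the tools you list would fail. For certain triples $(d_1,d_2,d_3)$ the system has a solution \emph{for every} $p$: the identically-present solutions with $u=0$ or $v=0$ (coming from the excluded values $x=0$, $x=-1$, i.e.\ $d_1=1$, $d_3=a_m+b_m$ or $d_2=1$, $d_3=b_m$), and the solution $(u,v,w)=(1,1,1)$ coming from $x=1$ when $d_1=1$, $d_2=2$. For these triples no congruence sieve can succeed, and the naive modular method also gets stuck: the relevant levels $2\cdot\mathrm{Rad}_2(D)$ and $2^5\cdot\mathrm{Rad}_2(D)$ carry newforms, and comparing Hecke eigenvalues at auxiliary primes does not eliminate them because the trace conditions are compatible with the trivial solution (and, for a handful of newforms with reducible mod-$\mathfrak{p}$ representation, are satisfied at \emph{every} auxiliary prime). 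The paper disposes of these cases by quoting Darmon--Merel for $2v^p-u^p=1$, by invoking the Bartolom\'e--Mih\u{a}ilescu theorem on $z_1^p-Dz_2^p=1$ to cut the exponents down to the odd primes dividing $\varphi(D)$, and then by a hybrid argument: finding a prime $\ell\equiv 1\pmod p$ for which the system has a \emph{unique} solution mod $\ell$ (forcing $\ell\mid uv$, hence multiplicative reduction of the Frey curve at $\ell$) that is incompatible with the surviving newforms, together with an irreducibility argument from Bugeaud--Mignotte--Siksek for the last few forms. Without some substitute for these ingredients, your plan would not terminate on precisely the triples that matter most. (By contrast, your proposed recourse to elliptic Chabauty and Mordell--Weil sieves at $p=3$ is unnecessary: the cubic Thue equations that arise are solved directly.)
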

\endgroup

When $m = 3$ or $m = 4$, these are known results (see \citep{m=3} and \citep{m=4}), and so we will consider the cases $5 \leq m \leq 50$. We note that there are no apparent obstructions to extending Theorem \ref{Mainthm} to larger values of $m$, say $m \leq 100$ for example, although we do not pursue this here as it would lead to needing to consider many further cases and would most likely not require new ideas.

The techniques we use to prove Theorem \ref{Mainthm} when $n $ is odd will be of a very different flavour to those used in \citep{part1}, where the main ideas are centred around the study of integral points on elliptic curves. Starting from equation (\ref{maineq}) we will form various systems of binomial Thue equations. We will then use a combination of local arguments  and the modular method via Frey curves, along with bounds arising from linear forms in logarithms to prove Theorem \ref{Mainthm}.

We now outline the rest of the paper. In Section 2, we use the results of \citep{part1} to find all solutions to equation (\ref{maineq}) in the case $n = 4$. We also treat the case $m = 5$. For the remainder of the paper, we will consider the case $n$ an odd prime and $ 6 \leq m \leq 50$. In Section 3, we reduce the problem to the study of finitely many systems of binomial Thue equations, and obtain a bound on $n$ using results on linear forms in logarithms. In Section 4, we solve the majority of these Thue equations using local arguments, and finally we use the modular method in various guises in Section 5 to deal with the remaining cases. 

\bigskip

The \texttt{Magma} \citep{magma} code used to support the computations in this paper can be found at: 

\vspace{3pt}

 \url{https://github.com/michaud-jacobs/pyramidal-2}

\bigskip

The third-named author would like to thank Samir Siksek and Damiano Testa for many useful discussions.

\section{The case \texorpdfstring{$n = 4$}{} and the case \texorpdfstring{$m = 5$}{}}

In this section we will treat certain cases that are not amenable to the methods of the later parts of the paper.

\begin{lemma}\label{n=4Lem} Let $(m,x,y)$ be a solution to equation (\ref{maineq}) with $n = 4$, $x > 0$, $y >1 $, and $6 \leq m \leq 50$. Then \[(m,x,y) = (15,2,2) \; \text{ or } \; (17,8,6). \]
\end{lemma}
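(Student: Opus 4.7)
The essential reduction is the observation that a fourth power is a square: if $\mathrm{Pyr}_m(x)=y^4$, then setting $Y=y^2$ yields $\mathrm{Pyr}_m(x)=Y^2$, with $Y$ a positive integer which is itself a perfect square. Conversely, among all square values of $\mathrm{Pyr}_m(x)$, the fourth powers are precisely those for which the square root $Y$ is again a square. The plan, therefore, is to appeal to the main theorem of \citep{part1}, which provides the complete list of positive integer triples $(m,x,Y)$ with $6\leq m\leq 100$ and $\mathrm{Pyr}_m(x)=Y^2$, and then simply sift this list.

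Concretely, I would proceed as follows. First, restrict the enumeration in \citep{part1} to the range $6\leq m\leq 50$, obtaining a finite explicit list of triples $(m,x,Y)$. Second, for each such triple, test whether $Y$ is a perfect square; if so, record the corresponding $(m,x,y)$ with $y=\sqrt{Y}$. Third, impose the condition $y>1$ to discard any solution with $\mathrm{Pyr}_m(x)=1$. The remaining triples constitute the full solution set for $n=4$ in the given range.

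As a sanity check one verifies by direct substitution that
$\mathrm{Pyr}_{15}(2)=\tfrac{1}{6}\cdot 2\cdot 3\cdot(13\cdot 2 -10)=16=2^4$
and
$\mathrm{Pyr}_{17}(8)=\tfrac{1}{6}\cdot 8\cdot 9\cdot(15\cdot 8-12)=1296=6^4$,
so these two triples certainly occur; the work is to show that no other triple in the list of \citep{part1} has $Y$ square.

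There is no real conceptual obstacle here: this lemma is essentially a corollary of \citep{part1} together with a bookkeeping check. The only minor subtlety is that the list in \citep{part1} may be presented case by case across the range of $m$, so care must be taken to traverse every case (including any sporadic or parametric families that arise) and to confirm that the squareness test on $Y$ really excludes all but the two solutions claimed. Once this check is carried out, the lemma follows.
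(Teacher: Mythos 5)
Your proposal is correct and is essentially identical to the paper's argument: the paper likewise notes that a solution with $n=4$ gives $(m,x,y^2)$ as a solution with $n=2$, and then sifts the complete list of square values from \citep[pp.~218--219]{part1} for the range $6 \leq m \leq 50$. No further comment is needed.
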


\begin{proof} Let $(m,x,y)$ be such a solution to equation (\ref{maineq}). Then $(m,x,y^2)$ is a solution to equation (\ref{maineq}) with $n = 2$, and so we can apply the results of \citep[pp.~218--219]{part1} to obtain all solutions for $6 \leq m \leq 50$. The two solutions we find are those stated in the lemma.
\end{proof}
 
\begin{lemma}\label{m=5Lem} Let $m = 5$. Let $(x,y,n)$ be a solution to equation (\ref{maineq}) with $x > 0$, $y >1 $, and $n \geq 3$. Then \[(x,y,n) = (57121, 3107, 4). \]
\end{lemma}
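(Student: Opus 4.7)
For $m = 5$, equation \eqref{maineq} simplifies to $x^2(x+1) = 2y^n$. The plan is to first reduce to the case where $n$ is either an odd prime or equal to $4$: any $n \geq 3$ either has an odd prime divisor $p$, in which case $y^n = (y^{n/p})^p$, or $n$ is a power of $2$ with $n \geq 4$; in the latter case $y^n = (y^{n/4})^4$ gives a solution with exponent $4$. For $n = 4$, a $2$-adic valuation argument rules out even $x$ (the left side has even $v_2$, the right side has odd $v_2$). For $x$ odd, coprimality of $x^2$ and $(x+1)/2$ forces $x^2 = a^4$ and $(x+1)/2 = b^4$, so $x = a^2$ and $a^2 + 1 = 2b^4$, which is Ljunggren's equation. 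By the classical theorem of Ljunggren, its only positive integer solutions are $(a, b) = (1, 1)$ and $(239, 13)$; the latter yields $(x, y) = (57121, 3107)$, the unique solution with $y > 1$.

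For $n$ an odd prime, I split on the parity of $x$. If $x$ is odd, writing $x^2 = a^n$ and $(x+1)/2 = b^n$, the oddness of $n$ forces $x = c^n$ for some positive integer $c$, and the equation becomes $c^n + 1 = 2b^n$. By the theorem of Darmon and Merel on the generalized Fermat equation $X^n + Y^n = 2Z^n$, the only positive coprime solution is $c = b = 1$, which gives $y = 1$ and is excluded. If $x$ is even, a careful $2$-adic analysis together with the coprimality of $x$ and $x+1$ forces $x = 2^{(n+1)/2} c^n$ and $x + 1 = f^n$ for positive integers $c, f$, yielding the binomial Thue equation
\[ f^n - 2^{(n+1)/2} c^n = 1. \]

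For $n = 3$ this is $f^3 - 4 c^3 = 1$, and examination of the unit group of $\mathbb{Z}[\sqrt[3]{2}]$ (with fundamental unit $\sqrt[3]{2} - 1$) shows that among $\pm(\sqrt[3]{2} - 1)^k$, only $k = 0$ gives an element of the form $f - c\sqrt[3]{4}$, forcing $c = 0$ and hence $x = 0$, excluded. The hardest step is handling $n \geq 5$ odd prime. A theorem of Bennett on binomial Thue equations $|aX^n - bY^n| = 1$ ensures at most one positive integer solution. To rule out any such solution, I would factor $(f-1) \cdot (f^{n-1} + \cdots + 1) = 2^{(n+1)/2} c^n$, noting that the two factors share at most a factor of $n$. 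Parity considerations (the sum $f^{n-1} + \cdots + 1$ has the same parity as $n$ when $f$ is odd) drive the problem into the Nagell--Ljunggren equation $(f^n - 1)/(f - 1) = B^n$, for which known results (Bugeaud, Mignotte, and others, via linear forms in logarithms and the modular method) rule out all solutions with $f, B \geq 2$; the delicate case $n \mid f - 1$ can be handled by an analogous but finer factorization, or by applying the modular method to the Frey curve $Y^2 = X(X - 1)(X + 2^{(n+1)/2} c^n)$ and running a level-lowering argument.
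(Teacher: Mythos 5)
Your treatment of $n=4$ (forcing $x$ odd via $2$-adic valuations, splitting $x^2\cdot\frac{x+1}{2}=y^4$ into coprime fourth powers, and invoking Ljunggren's theorem on $a^2+1=2b^4$) and of the subcase $n$ odd, $x$ odd (reduction to $c^n+1=2b^n$ and Darmon--Merel) coincides with the paper's argument, and your explicit reduction of general $n\geq 3$ to $n=4$ or $n$ an odd prime is a point the paper leaves implicit. The divergence, and the problem, is in the subcase $n$ an odd prime with $x$ even, where you arrive (correctly) at the binomial Thue equation $f^n-2^{(n+1)/2}c^n=1$. The paper disposes of this in one stroke by citing Bennett's Theorem 1.2 from \emph{Products of consecutive integers}, which rules out all positive coprime solutions of such equations. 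You instead propose an ad hoc route, and for $n\geq 5$ it does not close. Bennett's 2001 result that $\abs{aX^n-bY^n}=1$ has at most one positive solution gives you no contradiction by itself, and your fallback --- factoring $f^n-1=(f-1)(f^{n-1}+\cdots+1)$ and appealing to the Nagell--Ljunggren equation $(f^n-1)/(f-1)=B^n$ --- rests on a claim that is not available: the Nagell--Ljunggren equation is \emph{not} fully resolved in the literature (only partial results for small exponents, $q=2,3$, and certain congruence classes are known), and the subcase $n\mid f-1$, where the cyclotomic factor need not itself be a perfect $n$th power, is exactly the hard configuration; you acknowledge this but only gesture at ``a finer factorization'' or an unspecified Frey-curve argument. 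As written, the proof of the lemma for $n\geq 5$ and $x$ even is therefore incomplete.

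A smaller issue: for $n=3$ your unit-group argument for $f^3-4c^3=1$ asserts, without proof, that no power $\pm(\sqrt[3]{2}-1)^k$ with $k\neq 0$ has vanishing $\sqrt[3]{2}$-coefficient; this is true (it is in the Delone--Nagell circle of ideas) but requires a Skolem-type congruence argument or a citation, not just ``examination.'' Both gaps are repaired simultaneously by citing the single result the paper uses for this subcase.
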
 
 
\begin{proof} Suppose $(x,y,n)$ is such a solution to equation (\ref{maineq}). We have \begin{equation}\label{m=5} x^2(x+1) = 2y^n. \end{equation} Suppose first that $n = 4$. Then $2\ord_2(x)+ \ord_2(x+1) = 1 + 4 \ord_2(y)$, so $x$ is odd and $x + 1$ is even. We have \[ x^2 \left (\frac{x+1}{2} \right) = y^4, \] and $\gcd(x^2, (x+1)/2) = 1$. It follows that $x^2 = y_1^4$ and $x+ 1 = 2y_2^4$ for some coprime integers $y_1$ and $y_2$ with $y = y_1y_2$. Then $x = y_1^2$, since $x$ is positive, so \begin{equation} y_1^2 + 1 = 2y_2^4. \end{equation} This equation is known as Ljunggren's equation, since Ljunggren proved that the only positive integer solutions to this equation are given by $(y_1,y_2) = (1,1)$ and $(y_1,y_2) = (239,13)$ (see \citep{ljunggren} for the original proof, which is somewhat involved, or \citep{simpler} for an example of a simpler proof). Since $y > 1$, we obtain the solution $(x,y) = (y_1^2, y_1y_2) = (57121,3107)$ to equation (\ref{maineq}).

Suppose instead that $n$ is odd. In this case $x$ may be odd or even. If $x$ is odd, then arguing similarly to above, there exist coprime integers $y_1$ and $y_2$ satisfying \begin{equation*} y_1^n + 1 = 2y^n. \end{equation*} This equation has no solutions with $y_1y_2 > 1$  and $n \geq 3$ by \citep[Main~Theorem]{ll2}. If $x$ is even, then we find that there exist coprime integers $y_1$ and $y_2$ satisfying \[ x^2 = 2y_1^n \; \text{ and } \; x+1 = y_2^n. \] From $x^2 = 2y_1^n$, we see that $\ord_2(y) = k \geq 1$, with $k$ odd, and we may write $y_1 = 2^k z_1^2$ for some positive integer $z_1$ coprime to $y_2$. Then  \[ x =  2^{\frac{kn+1}{2}} z_1^n, \] and so \[2^{\frac{kn+1}{2}} z_1^n - y_2^n = 1. \] By \citep[Theorem~1.2]{consecutive}, this equation has no solutions in positive coprime integers. This completes the case $m = 5$.
\end{proof}

\section{Systems of binomial Thue equations}

Thanks to Lemmas \ref{n=4Lem} and \ref{m=5Lem}, we may suppose that $n \geq 3$ is odd and that $6 \leq m \leq 50$. Moreover, we will assume that $n = p$ is an odd prime. We write equation (\ref{maineq}) as \begin{equation}\label{eq2} x(x+1)(a_mx-b_m) = c_m y^p, \end{equation} 
where \begin{align*} (a_m,b_m,c_m) = \begin{cases} \left(\frac{m-2}{3}, \frac{m-5}{3}, 2 \right) &  \text{ if } \; m \equiv 2 \pmod{3}, \\ (m-2, m-5, 6) & \text{ if } \;  m \not\equiv 2 \pmod{3}. \end{cases} \end{align*}
We introduce the notation \begin{equation*} d_1 = \gcd(x, a_mx-b_m) \quad \text{and} \quad d_2 = \gcd(x+1,a_mx-b_m). \end{equation*} By writing $a_mx-b_m = a_m(x+1) - (a_m+b_m)$, it is straightforward to see that \[ \gcd(x,x+1) = 1, \; \; d_1 \mid b_m, \; \; \text{ and} \; \; d_2 \mid a_m+b_m. \] Even though $d_1$ and  $d_2$ are unknown, we know a finite list of possibilities for each one; namely the divisors of $b_m$ and $a_m+b_m$ respectively. We also note that $\gcd(a_m,b_m) = 1$, since $m \ne 3$.

We may now divide both sides of equation (\ref{eq2}) by $c_m$ and the appropriate $p$th powers of $d_1$ and $d_2$ to obtain  \begin{equation} \label{denom} \left( \frac{x}{A} \right) \left( \frac{x+1}{B} \right) \left( \frac{a_mx-b_m}{C} \right) = Y^p.  \end{equation} Here, $Y \mid y$ is a positive integer,  $A, B,$ and $C$ are positive integers satisfying \[ A \mid c_m (b_m)^p, \quad B \mid c_m(a_m+b_m)^p, \quad C \mid c_m (b_m)^p (a_m+b_m)^p, \] and the three factors on the left-hand side of equation (\ref{denom}) are integral and pairwise coprime. Moreover, we may assume that $A$, $B$, and $C$ are $p$th power free. Later in this section we will provide, for each $m$, the precise list of possibilities for the triple $(A,B,C)$. We emphasise that the triple $(A,B,C)$ depends on $m$, and will usually also depend on $p$. It follows that there exist positive pairwise coprime integers $y_1, y_2,$ and $y_3$ satisfying
\begin{equation*} x = Ay_1^p, \qquad x+1 = By_2^p, \qquad a_mx-b_m = Cy_3^p. \end{equation*} This leads us to consider the following system of binomial Thue equations 
\begin{align} \label{system}
\begin{split}
B \, y_2^p - A \, y_1^p & = 1 \\
a_m A \, y_1^p - C \, y_3^p & = b_m. 
\end{split}
\end{align}
For each fixed value of $m$, we aim to solve this system of equations for each possible triple $(A,B,C)$. We start by providing a bound for $p$ using results on linear forms in logarithms.

\begin{proposition}\label{LogBound} Let $(x,y,m,p)$ be a solution to equation (\ref{eq2}) with $y > 1$. Then \[ p < 10676 \cdot \log \left( c_m^2 \cdot b_m \cdot (a_m+b_m) \right). \] 
\end{proposition}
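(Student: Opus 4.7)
The plan is to extract a binomial Thue equation from the system (\ref{system}), convert it into a linear form in two logarithms, and combine an elementary upper bound with an explicit lower bound from the theory of linear forms in logarithms (in the style of Laurent 2008, or Laurent--Mignotte--Nesterenko) to obtain an effective upper bound on $p$.

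Starting from the first equation $By_2^p - Ay_1^p = 1$ of the system, I would rewrite it as
\[
\left|\frac{B}{A}\left(\frac{y_2}{y_1}\right)^p - 1\right| = \frac{1}{Ay_1^p},
\]
so that the linear form $\Lambda := p\log(y_2/y_1) + \log(B/A)$ satisfies $|\Lambda| \leq 2/(Ay_1^p) \leq 2/x$. It is nonzero, since $\Lambda = 0$ would force $By_2^p = Ay_1^p$, contradicting the equation. Degenerate cases (where $y_1 = 1$ or $y_2 = 1$) are handled directly: they constrain $x$ or $x+1$ to lie in a small effective set, and either contribute no solutions or yield a trivial bound on $p$.

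I would then apply an explicit lower bound for linear forms in two logarithms to obtain, for $\Lambda \neq 0$,
\[
\log|\Lambda| \geq -C_0 \cdot h(y_2/y_1) \cdot h(B/A) \cdot (\log p + C_1)^2,
\]
for explicit absolute constants $C_0, C_1$. Since $y_1^p \leq x$ and $y_2^p \leq x+1$, we have $h(y_2/y_1) = \log\max(y_1, y_2) \leq (\log(x+1))/p$, producing a crucial factor of $1/p$. Combining with the upper bound on $|\Lambda|$, the dominant $\log x$ terms cancel to leading order, leaving an inequality of the form $p \leq C_0' \cdot h(B/A) \cdot (\log p + C_1)^2$, after which a standard iterative argument absorbs the $(\log p)^2$ factor into the final numerical constant.

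To conclude, I would bound $h(B/A) = \log\max(A, B)$ using the divisibility constraints $A \mid c_m b_m^p$ and $B \mid c_m(a_m+b_m)^p$, the $p$-th power free assumption on $A, B$, and the coprimality restrictions imposed by equation (\ref{eq2}), in conjunction with the explicit list of admissible triples $(A,B,C)$ enumerated later in the section. This should yield a bound of the shape $h(B/A) \leq \log(c_m^2 \, b_m(a_m+b_m))$, producing the proposition's inequality. The main technical obstacle is the precise tracking of constants in Laurent's theorem required to arrive at the specific value $10676$, which depends sensitively on the optimal choice of the auxiliary parameters (the quantity $b'$, the degree $D = 1$, and the modified heights $h'$). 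A secondary subtlety is that the naive estimate $A \leq c_m b_m^{p-1}$ is exponential in $p$; the required uniform-in-$p$ bound on $h(B/A)$ must be extracted from the explicit list of admissible $(A, B, C)$, or by isolating cases where $A$ or $B$ is exceptionally large (forcing $y_1$ or $y_2$ to be small, which reduces to the degenerate cases already handled).
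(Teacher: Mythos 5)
Your overall strategy (extract a binomial Thue equation, bound $p$ via linear forms in two logarithms) is the same as the paper's, but there is a genuine gap at exactly the point you flag as a ``secondary subtlety,'' and neither of your proposed fixes closes it. If you start from the first equation $By_2^p - Ay_1^p = 1$ of the system (\ref{system}), the coefficients $A$ and $B$ are \emph{not} bounded independently of $p$: the admissible triples listed in Section 3 explicitly contain prime powers with exponents $p - r_i$ (e.g.\ $A$ may be divisible by $p_1^{p-r_1}$), so $\log\max(A,B)$ can be of size $p\log b_m$ and the desired estimate $h(B/A) \leq \log\bigl(c_m^2\, b_m (a_m+b_m)\bigr)$ is simply false for these triples. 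Your first fallback (read the bound off the explicit list) therefore fails, and your second (a large $A$ forces $y_1$ small) is not correct either: $x = Ay_1^p$ is unbounded, so a large coefficient places no constraint on $y_1$. The missing idea is the paper's rewriting: after dividing equation (\ref{eq2}) by $c_m d_1^p d_2^p$, whenever a factor $d_i^{p-1}$ appears in a denominator one writes $1/d_i^{p-1} = d_i/d_i^p$ and absorbs $d_i^p$ into the $p$-th power variable. This yields $x = (u_1/v_1)z_1^p$ and $x+1 = (u_2/v_2)z_2^p$ with $u_1v_1 \mid c_m b_m$ and $u_2v_2 \mid c_m(a_m+b_m)$, hence the Thue equation (\ref{logthue}) with coefficients bounded by $c_m^2 b_m(a_m+b_m)$ \emph{uniformly in} $p$. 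That uniformity is the entire content of the reduction.

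A second, lesser, discrepancy: the constant $10676$ does not come from a fresh Laurent-type computation. The paper applies Mignotte's explicit theorem on $ax^n - by^n = c$ as a black box (which internally performs the two-logarithm analysis you sketch), using $\lambda \geq \log 2$ so that $7400/\lambda < 10676$. Re-deriving an explicit constant from Laurent's theorem, as you propose, is a substantial computation with no guarantee of landing on this particular value; citing the ready-made result is what makes the proof short. Your treatment of the degenerate cases $y_1 = 1$ or $y_2 = 1$ would also need care (e.g.\ $y_1 = 1$ forces $x = A$, which still depends on $p$, so it does not immediately ``contribute no solutions''), but this is secondary to the coefficient-bound issue above.
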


\begin{proof} We aim to obtain a binomial Thue equation with coefficients independent of $p$. We divide both sides of equation (\ref{eq2}) by $c_m \cdot d_1^p \cdot d_2^p$. to obtain  \[ \left(\frac{x}{e_1 \cdot d_1^{r_1}} \right) \left(\frac{x+1}{e_2 \cdot d_2^{r_2}} \right) \left(\frac{a_mx-b_m}{D} \right) = \left(\frac{y}{d_1d_2}\right)^p, \] where $r_1, r_2 \in \{1,p-1\}$, $e_1, e_2 \mid c_m$, $D$ is some integer, $Y \mid y$, and the three factors on the left-hand side are integral and pairwise coprime. Now, if $r_i = p-1$ then we rewrite $1/d_i^{r_i}$ as $d_i / d_i^p$. It follows that \[ x = \frac{u_1}{v_1} z_1^p \; \text{ and } \; x+1 = \frac{u_2}{v_2} z_2^p, \] for some positive integers $u_i, v_i,$ and $z_i$. Moreover, we observe that \[ u_1v_1 \mid c_m b_m \; \text{ and } \; u_2v_2 \mid c_m (a_m+b_m). \] 
We then obtain the binomial Thue equation \begin{equation} \label{logthue} u_2v_1z_2^p - u_1v_2 z_1^p = v_1v_2, \end{equation} with $\max \{u_2v_1, u_1v_2, v_1v_2 \} \leq c_m^2 \cdot b_m \cdot (a_m+b_m)$. The proposition now follows by applying \citep[Theorem~2]{mignotte}, a result due to Mignotte obtained using linear forms in logarithms, where we have used $\lambda \geq \log(2)$ in this result, so that $7400 / \lambda < 10676$. 
\end{proof}

To give some indication of the magnitude of this bound, when $m = 6$ the bound we obtain is $55440$, and  when $m = 50$ the bound is $80372$. We note that for each triple $(A,B,C)$ we could provide a distinct bound on $p$ using the system (\ref{system}), and this bound will usually be smaller than the one obtained in Proposition \ref{LogBound}, but for simplicity we use a single bound for each value of $m$.

We will now list the possibilities for the triple $(A,B,C)$ for each value of $m$. In general, we write \[b_m = 2^t \cdot p_1^{r_1} \cdot q_1^{s_1} \quad a_m+b_m = p_2^{r_2} \cdot q_2^{s_2}, \] where $p_i$ and $q_i$ are odd primes that do not divide $c_m$, and $t, r_i, s_i$ are non-negative integers. We split into six cases dependent on $t = \ord_2(b_m)$.

\subsection*{Case 1: \texorpdfstring{$\ord_2(b_m) = 0$}{}}

Here,
\begin{align*} m \in \{ & 6, 8, 10, 12, 14, 16, 18, 20, 22, 24, 26, 28, 30, 32, 34, 36, 38, 40, 42, 44, 46,  \\ & 48, 50 \}.\end{align*} We have
\begin{itemize}
\item $b_m = p_1^{r_1} \cdot q_1^{s_1}$ for some $r_1 \in \{ 0,1,2 \}$, $s_1 \in \{ 0,1 \}$, and $p_1, q_1 \nmid c_m$ are prime;
\item $a_m+b_m = p_2^{r_2} \cdot q_2^{s_2}$ for some $r_2 \in \{ 1,2,3 \}$, $s_2 \in \{0,1\}$, and $p_2, q_2 \nmid c_m$ are prime.
\end{itemize}
Then \begin{align*} A & = 2^{\alpha_1} \cdot 3^{\beta_1} \cdot p_1^{\gamma_1} \cdot q_1^{\delta_1}, \\
B & = 2^{\alpha_2} \cdot 3^{\beta_2} \cdot p_2^{\gamma_2} \cdot q_2^{\delta_2},\\ 
C & = 3^{\beta_3} \cdot p_1^{p-\gamma_1} \cdot q_1^{p-\delta_1} \cdot p_2^{p-\gamma_2} \cdot q_2^{p-\delta_2}, \end{align*}
where \begin{align*} (\alpha_1,\alpha_2) & \in \{(0,1),(1,0) \}, & \\
(\beta_1, \beta_2, \beta_3) & \in \begin{cases} \{ (1,0,0),(0,1,0),(0,0,1) \} & \text{ if } c_m = 6, \\
 \{ (0,0,0) \} & \text{ if } c_m = 2, \end{cases} \\
\gamma_i & \in \{0,r_i,p-r_i \}, & \\
\delta_i & \in \{0,s_i,p-s_i\}.  & \\
\end{align*}
We note that if, say, $\gamma_1 = 0$, then we use the convention of removing the perfect $p$th power $p_1^p$ from $C$. We do this (in each case) to avoid introducing too many variables.

\begin{proof}[Proof of Case 1] As discussed earlier in this section, the basic idea is to divide both sides of (\ref{eq2}) by $p$th powers of $d_1$ and $d_2$ in order to obtain three pairwise coprime factors. We will work one prime at a time.

We start by dividing both sides of equation (\ref{eq2}) by $2$. If $x$ is even then $A$ will be exactly divisible by $2$ and both $B$ and $C$ will be odd. Otherwise, both $x+1$ and $a_mx-b_m$ are even. In this case we choose to divide $x+1$ by $2$ so that $B$ is even and $A$ and $C$ are odd.

Next, if $c_m = 6$, we divide both sides by $3$, and precisely one of $x$, $x+1$, and $a_mx-b_m$ will be divisible by $3$, so precisely one of $A$, $B$, and $C$ will be exactly divisible by $3$.

We now consider the prime $p_2$ and split into three cases depending on the value of $r_2$. The other primes ($p_1$, $q_1$, and $q_2$) are dealt with in the same manner.

\begin{enumerate}[(i)] 
\item Case $r_2 = 1$. If $p_2 \nmid d_2$, then our three factors will be pairwise coprime at $p_2$ (i.e. $p_2$ does not divide more than one of the three factors) and there is nothing more to do, so we assume that $p_2 \mid d_2$. Since $p_2 \parallel a_m+b_m$, we have that  $p_2 \parallel d_2$.

We then divide both sides of equation (\ref{eq2}) by $p_2^p$. If $p_2 \parallel x$, then $p_2^{p-1} \mid a_mx-b_m$, so $B$ will have a factor of $p_2$ and $C$ will have a factor of $p_2^{p-1}$. Otherwise, $p_2 \parallel a_mx-b_m$, so $C$ will have a factor of $p_2$ and $B$ will have a factor of $p_2^{p-1}$.

\item Case $r_2 = 2$. If $p_2 \nmid d_2$ then the three factors are pairwise coprime at $p_2$. If $p_2^2 \parallel d_2$, then after dividing by $p_2^p$, one of $B$ and $C$ will have a factor of $p_2^2$, and the other will have a factor of $p_2^{p-2}$. 

Next, if $p_2 \parallel d_2$, then one of $x+1$ and $a_mx-b_m$ will be divisible by $p_2^{p-1}$, and in particular by $p_2^2$. If $p_2^2 \mid x+1$, then since $p_2^2 \mid a_m+b_m$, we obtain $p_2^2 \mid a_mx-b_m$, contradicting $p_2 \parallel d_2$. We obtain a similar contradiction in the case $p_2^2 \mid a_mx-b_m$ and therefore conclude that $\ord_{p_2}(d_2) \ne 1$.

\item Case $r_2 = 3$. If $p_2 \nmid d_2$ then we argue as in the first two cases. Arguing as in Case (ii), we see that $\ord_{p_2}(d_2) \ne 1$ or $2$. Suppose $p_2^3 \parallel d_2$. We then divide both sides of equation (\ref{eq2}) by $p_2^p$. One of $B$ and $C$ will have a factor of $p_2^3$ and the other a factor of $p_2^{p-3}$.
\end{enumerate}

We repeat this process with the primes $p_1$, $q_1$, and $q_2$, until the factors are pairwise coprime. This gives the possibilities listed for the triple $(A,B,C)$. \end{proof}

\subsection*{Case 2: \texorpdfstring{$\ord_2(b_m) = 1$}{}}

Here, \[ m \in \{ 7, 11, 15, 19, 23, 27, 31, 35, 39, 47 \}. \] We have
\begin{itemize}
\item $b_m = 2 \cdot p_1^{r_1}$ for some $r_1 \in \{ 0,1 \}$, and $p_1 \nmid c_m$ is prime;
\item $a_m+b_m = p_2 \cdot q_2^{s_2}$, for some $s_2 \in \{ 0,1 \}$, and $p_2, q_2 \nmid c_m$ are prime.
\end{itemize}
Then \begin{align*} A & = 2^{\alpha_1} \cdot 3^{\beta_1} \cdot p_1^{\gamma_1}, \\
B & = 2^{\alpha_2} \cdot 3^{\beta_2} \cdot p_2^{\gamma_2} \cdot q_2^{\delta_2},\\ 
C & = 2^{\alpha_3} \cdot 3^{\beta_3} \cdot p_1^{p-\gamma_1} \cdot p_2^{p-\gamma_2} \cdot q_2^{p-\delta_2},\end{align*}
where \begin{align*} (\alpha_1,\alpha_2, \alpha_3) & \in \{(1,0,0),(0,1,0),(0,0,1) \}, & \\
(\beta_1, \beta_2, \beta_3) & \in \begin{cases} \{ (1,0,0),(0,1,0),(0,0,1) \} & \text{ if } c_m = 6, \\
 \{ (0,0,0) \} & \text{ if } c_m = 2, \end{cases} \\
\gamma_1 & \in \{0,r_1,p-r_1\}, &  \\
\gamma_2 & \in \{0,1,p-1\}, & \\
\delta_2 & \in \{0,s_2,p-s_2 \}. &
\end{align*}

\begin{proof}[Proof of Case 2] For primes away from $2$, we argue in exactly the same way as in Case 1. We only need to consider what happens at the prime $2$. If $x$ is odd, then $a_mx-b_m$ is also odd, and we simply divide equation (\ref{eq2}) by $2$ so that $B$ is exactly divisible by $2$. Since $a_m+b_m$ is odd, there is nothing more to do.

Now we suppose $x$ is even. Since $\ord_2{b_m} = 1$, we must have $2 \parallel  d_1$. We divide both sides of equation (\ref{eq2}) by $2^{p+1}$. Either $2 \parallel x$ and $2^p \mid a_mx-b_m$, or $2^p \mid x$ and $ 2 \parallel a_m x- b_m$. Since we may absorb any $p$th powers, we have  $(\alpha_1,\alpha_3) = (0,1)$ or $(1,0)$.
\end{proof}

\subsection*{Case 3: \texorpdfstring{$\ord_2(b_m) = 2$}{}}

Here, \[m \in \{ 9,17,25,33,41,49  \}. \] We have
\begin{itemize}
\item $b_m = 4 \cdot p_1^{r_1}$ for some $r_1 \in \{ 0,1 \}$, and $p_1 \nmid c_m$ is prime;
\item $a_m+b_m = p_2^{r_2} \cdot q_2^{s_2}$, for some $r_2 \in \{1,2\}$, $s_2 \in \{ 0,1 \}$, and $p_2, q_2 \nmid c_m$ are prime.
\end{itemize}
Then \begin{align*} A & = 2^{\alpha_1} \cdot 3^{\beta_1} \cdot p_1^{\gamma_1}, \\
B & = 2^{\alpha_2} \cdot 3^{\beta_2} \cdot p_2^{\gamma_2} \cdot q_2^{\delta_2},\\ 
C & = 2^{\alpha_3} \cdot 3^{\beta_3} \cdot p_1^{p-\gamma_1} \cdot p_2^{p-\delta_2} \cdot q_2^{p-\delta_2},\end{align*}
where \begin{align*} (\alpha_1,\alpha_2, \alpha_3) & \in \{(0,1,0),(2,0,p-1),(p-1,0,2) \}, & \\
(\beta_1, \beta_2, \beta_3) & \in \begin{cases} \{ (1,0,0),(0,1,0),(0,0,1) \} & \text{ if } c_m = 6, \\
 \{ (0,0,0) \} & \text{ if } c_m = 2, \end{cases} \\
\gamma_1 & \in \{0,r_1,p-r_1\}, &\\
\gamma_2 & \in \{0,r_2,p-r_2\}, &\\
\delta_2 & \in \{0,s_2,p-s_2 \}. &
\end{align*}

\begin{proof}[Proof of Case 3]
We will only consider what happens at the prime $2$ in the case $x$ even. The other primes and the case when $x$ is odd can be dealt with as in the proofs of cases 1 and 2.

We first claim that $2^2 \parallel d_1$. If not, then we must have $2 \parallel d_1$, and then either $2^2 \mid x$ or $2^2 \mid (a_mx-b_m)$. Since $2^2 \mid b_m$, we will have that $2^2 \mid x$ and $2^2 \mid a_mx-b_m$, a contradiction, proving the claim.

We now divide both sides of equation (\ref{eq2}) by $2^{p+1}$. One of $x$ and $a_mx-b_m$ will be exactly divisible by $2^2$ and the other will be divisible by $2^{p-1}$, and so $(\alpha_1, \alpha_3) = (2,p-1)$ or $(p-1,2)$. 
\end{proof}

\subsection*{Case 4: \texorpdfstring{$\ord_2(b_m) = 3$}{}}

Here, \[m \in \{ 13,29,45 \}. \] We have
\begin{itemize}
\item $b_m = 8 \cdot p_1^{r_1}$ for some $r_1 \in \{ 0,1 \}$, and $p_1 \nmid c_m$ is prime;
\item $a_m+b_m = p_2$, and $p_2 \nmid c_m$ is prime.
\end{itemize}
Then \begin{align*} A & = 2^{\alpha_1} \cdot 3^{\beta_1} \cdot p_1^{\gamma_1}, \\
B & = 2^{\alpha_2} \cdot 3^{\beta_2} \cdot p_2^{\gamma_2},\\ 
C & = 2^{\alpha_3} \cdot 3^{\beta_3} \cdot p_1^{p-\gamma_1} \cdot p_2^{p-\gamma_2}, \end{align*}
where \begin{align*} (\alpha_1,\alpha_2, \alpha_3) & \in \{(0,1,0),(3,0,p-2),(p-2,0,3) \}, & \\
(\beta_1, \beta_2, \beta_3) & \in \begin{cases} \{ (1,0,0),(0,1,0),(0,0,1) \} & \text{ if } c_m = 6, \\
 \{ (0,0,0) \} & \text{ if } c_m = 2, \end{cases} \\
\gamma_1 & \in \{0,r_1,p-r_1\}, & \\
\gamma_2 & \in \{0,1,p-1\}. &
\end{align*}
When $p = 3$, we must also consider the cases $(\alpha_1, \alpha_2, \alpha_3) = (2,0,2)$, with $\beta_i$ and $\gamma_i$ varying as above.

\begin{proof}[Proof of Case 4] As in Case 3, we will only consider $x$ even and the prime $2$. If $2 \parallel d_1$ then we obtain a contradiction as in Case 3. Suppose $2^2 \parallel d_1$. Then if $2^3 \mid x$ or $a_mx-b_m$ we obtain a contradiction as before, so we must have $2^2 \parallel x, a_mx-b_m$. The valuation at $2$ of the left-hand side of equation (\ref{eq2}) is thus $4$, and it is $p\ord_2(y)+1$ for the right-hand side of equation (\ref{eq2}). This forces $p = 3$ and $(\alpha_1,\alpha_3)  =(2,2)$.

Finally, if $2^3 \parallel \gcd(x,a_mx-b_m)$ then we divide by $2^{p+1}$ and one of $A$ and $C$ will have a factor of $2^3$, and the other a factor of $2^{p-2}$.
\end{proof}

\subsection*{Case 5: \texorpdfstring{$\ord_2(b_m) = 4$}{}}

Here, \[m = 21.\] We have
\begin{itemize}
\item $b_m = 16$ and $c_m = 6$;
\item $a_m+b_m = p_2 \cdot q_2$, and $p_2, q_2 \nmid c_m$ are prime.
\end{itemize}
Then \begin{align*} A & = 2^{\alpha_1} \cdot 3^{\beta_1}, \\
B & = 2^{\alpha_2} \cdot 3^{\beta_2} \cdot p_2^{\gamma_2} \cdot q_2^{\delta_2},\\ 
C & = 2^{\alpha_3} \cdot 3^{\beta_3} \cdot p_2^{p-\gamma_2} \cdot q_2^{p-\delta_2}, \end{align*}
where \begin{align*} (\alpha_1,\alpha_2, \alpha_3) & \in \{(0,1,0),(4,0,p-3),(p-3,0,4) \}, \\
(\beta_1, \beta_2, \beta_3) & \in \{ (1,0,0),(0,1,0),(0,0,1) \}, \\
\gamma_2, \delta_2 & \in \{0,1,p-1\}.
\end{align*}
When $p = 3$ or $p = 5$ we must also consider the cases \[(\alpha_1, \alpha_2, \alpha_3) = \left((p+1)/2, 0, (p+1)/2 \right), \] with $\beta_i, \gamma_2,$ and $\delta_2$ varying as above.

\begin{proof}[Proof of Case 5]
We are in a very similar set-up to Case 4. If $2^2 \parallel d_1$ then we must have $p = 3$ and $(\alpha_1,\alpha_3) = (2,2)$. If $2^3 \parallel d_1$, then by comparing valuations at $2$ on each side of equation (\ref{eq2}), we have $6 = p \ord_2(y) + 1$, which forces $p = 5$ and $(\alpha_1,\alpha_3) = (3,3)$.
Next, if $2^4 \parallel d_1$, then we divide through by $2^{p+1}$ and argue as in previous cases.
\end{proof}

\subsection*{Case 6: \texorpdfstring{$\ord_2(b_m) = 5$}{}}

Here, \[ m = 37. \] We have
\begin{itemize}
\item $b_m = 32$ and $c_m = 6$;
\item $a_m+b_m = p_2$, and $p_2 \nmid c_m$ is prime.
\end{itemize}
Then \begin{align*} A & = 2^{\alpha_1} \cdot 3{^\beta_1}, \\
B & = 2^{\alpha_2} \cdot 3^{\beta_2} \cdot p_2^{\gamma_2},\\ 
C & = 2^{\alpha_3} \cdot 3^{\beta_3} \cdot p_2^{p-\gamma_2},
\end{align*}
where \begin{align*} (\alpha_1,\alpha_2, \alpha_3) & \in \begin{cases} \{(0,1,0),(5,0,p-4),(p-4,0,5) \} & \text{ if } p > 3, \\ \{(0,1,0),(2,0,2) \} & \text{ if } p = 3,
\end{cases}
 \\
(\beta_1, \beta_2, \beta_3) & \in \{ (1,0,0),(0,1,0),(0,0,1) \}, & \\
\gamma_2 & \in \{0,1,p-1\}. &
\end{align*}
When $p = 3$, $p = 5$, or $p = 7$ we must also consider the cases \[(\alpha_1, \alpha_2, \alpha_3) = \left((p+1)/2, 0, (p+1)/2 \right), \] with $\beta_i$ and  $\gamma_2$ varying as above.

\begin{proof}[Proof of Case 6] This is almost identical to the argument given in Case 5, apart when $2^5 \parallel d_1$ and $p = 3$. In this case, dividing by $2^{p+1} = 2^4$ is not enough to make the factors coprime. Instead, we divide through by $2^{2p+1} = 2^7$. One of $A$ and $C$ will have a factor of $2^5$, and the other a factor of $2^2$. After removing the perfect cube $2^3$ from $2^5$, we have $(\alpha_1, \alpha_3) = (2,2)$.
\end{proof}

\section{The Local Method}

We consider the system (\ref{system}) of Thue equations for a fixed value of $m$ and triple $(A,B,C)$. We will start by considering this system mod $\ell$, for many auxilliary primes $\ell$ to try and obtain a contradiction; since if the system of equations has no local solution then it will certainly not have a global solution. When the system of equations does not have a (global) solution, we found this method to be extremely effective (as we see below). The strategy we present here is used with a single binomial Thue equation in \citep[p.~492]{local}. 

Fix a prime $p > 2$. We search for a prime $\ell$ such that $\ell = 2kp+1$ for some $k \geq 1$ (i.e. $\ell \equiv 1 \pmod{p}$), such that $\ell \nmid ABC$, and for which the system of equations has no solution mod $\ell$. If we can find such an $\ell$, then we have obtained a contradiction. The reason for choosing $\ell$ of this form is that we have, for each $i \in \{1,2,3\}$, either $\ell \mid y_i$, or \[ (y_i^p)^{2k} = y_i^{\ell-1} \equiv 1 \pmod{\ell}. \] In particular, $y_i^p \in \mu_{2k}(\mathbb{F}_\ell) \cup \{0\},$ where $ \mu_{2k}(\mathbb{F}_\ell)  = \{ \alpha \in \mathbb{F}_\ell : \alpha^{2k}  = 1 \}$. We therefore only have $2k+1$ possibilities for $y_i^p \pmod{\ell}$, and moreover the set $\mu_{2k}(\mathbb{F}_\ell)$ can be computed extremely quickly using a primitive root modulo $\ell$. Indeed, if $g$ is a primitive root modulo $\ell$, then  \[ \mu_{2k}(\mathbb{F}_\ell) = \{({g^p})^r : 0 \leq r \leq 2k-1 \}.\]

For each triple $(A,B,C)$, we searched for a prime $\ell$ by testing with $1 \leq k \leq 150$. For $p>5$, with $p$ less than the prime bound for $m$ obtained in Proposition \ref{LogBound}, apart from the cases where we have a global solution, and a single case when $p = 7$, we succeeded in obtaining a contradiction.

When $p=3$ or $p = 5$, the method sometimes fails even when there is no global solution. In these cases, as $p$ is small we can simply solve the two Thue equations using \texttt{Magma} and verify whether we have a solution $(y_1,y_2,y_3)$ with $y_1,y_2 > 0$ (since $x > 0$). As mentioned above, the local method also fails for $p = 7$ in a single case. This is for the case $m = 21$ and $(A,B) = (2^4 \cdot 3, 1)$. Here we also simply solve the corresponding Thue equations directly to conclude there are no non-zero solutions.

For certain triples, the local method will fail \emph{for all values of $p$} as we have a global solution for all $p$. There are three cases when this happens.

\begin{enumerate}[(I)]
\item $A = 1, B = 2$, and $a_m-C = b_m$ . Here we have a global solution $(y_1,y_2,y_3) = (1,1,1)$ for all $p$, which comes from the solution $x= y = 1$ to our original equation. However, in this case, our first Thue equation is \[ 2y_2^p - y_1^p = 1.\] Applying a well-known result of Darmon and Merel \citep[Main~Theorem]{ll2}, we see that $y_1 = y_2 = 1$ for all $p$, so $x = 1$.

\item $A = 1$ and $C = a_m + b_m$. This admits the solution $(y_1,y_2,y_3) = (-1,0,-1)$.

\item $B = 1$ and $C = b_m$. This admits the solution $(y_1,y_2,y_3) = (0,1,-1)$. 
\end{enumerate}

In cases (II) and (III) we must use a different strategy. We use Theorem \ref{ThmD} (stated below) together with the modular method.

\section{The Modular Method}

It remains to deal with cases (II) and (III), outlined in Section 4, for each $6 \leq m \leq 50$. In each case, we have $A = 1$ or $B = 1$, and this leads to an equation of the form \begin{equation}\label{eqD} z_1^p - Dz_2^p = 1 \end{equation} for integers $z_1$ and  $z_2$, where $D = A$ if $B = 1$, and $D = B$ if $A = 1$. The following result of Bartolom\'e and Mih{\u{a}}ilescu will be very helpful. 

\begin{theorem}[{\citep[Theorem~1.3]{bartmihai}}]\label{ThmD} Let $D>1$ and and let $p$ be an odd prime satisfying \[ \gcd(\mathrm{Rad}(\varphi(D),p) = 1. \] Suppose $z_1$ and $z_2$ are integers satisfying equation (\ref{eqD}) with $ \abs{z_2} > 1$. Then either $(z_1,z_2,D,p) = (18,7,17,3)$ or $p > 163 \cdot 10^{12}$.
\end{theorem}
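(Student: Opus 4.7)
My plan is to pass to the $p$th cyclotomic field $K = \Q(\zeta_p)$ and factor the equation $z_1^p - 1 = D z_2^p$ as $\prod_{k=0}^{p-1}(z_1 - \zeta_p^k) = D z_2^p$. A standard greatest-common-divisor computation shows that, away from the unique prime of $K$ above $p$ and the primes above the rational divisors of $D$, the factors $z_1 - \zeta_p^k$ are pairwise coprime. Hence at the level of ideals each factor is a $p$th power times contributions supported at $p$ and $D$.

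The hypothesis $\gcd(\mathrm{Rad}(\varphi(D)), p) = 1$ is what makes this approach work. It ensures that for every rational prime $q \mid D$ with $q \ne p$ the multiplicative order of $q$ modulo $p$ is coprime to $p$, so the $\Gal(K/\Q)$-orbits of the primes of $K$ above $D$ all have length coprime to $p$; it also forces $p \parallel D$ when $p \mid D$, keeping the ramification at $p$ benign. Under these conditions I would run a Mih{\u{a}}ilescu-style cyclotomic descent: apply Stickelberger's relation to annihilate the ideal class of $(z_1 - \zeta_p)/\mathfrak{d}$, where $\mathfrak{d}$ collects the $D$-part, and then appeal to Mih{\u{a}}ilescu's results on the Galois-module structure of the minus part of the class group of $K$ to promote the ideal equality into an equality of elements, of the form $z_1 - \zeta_p = \varepsilon \cdot \delta \cdot \alpha^p$, where $\varepsilon$ is a cyclotomic unit, $\delta$ is an explicit element supported on primes above $D$, and $\alpha \in \mathcal{O}_K$.

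Taking a suitable Galois average of this identity (for instance applying $\sigma - 1$ for a well-chosen $\sigma \in \Gal(K/\Q)$) and reducing modulo $p^2$ then translates it into a Wieferich-type congruence, typically of shape $D^{p-1} \equiv 1 \pmod{p^2}$ possibly twisted by an explicit cyclotomic factor. Such a congruence is so restrictive that, for each small $D$ and each odd prime $p$ below the stated threshold, one can rule it out by a direct computer search (the exceptional quadruple $(18,7,17,3)$ being the unique pair $(D,p) = (17,3)$ surviving this search and genuinely giving a global solution, since $18^3 - 17 \cdot 7^3 = 1$). For the remaining range $p \le 163 \cdot 10^{12}$ not handled by the Wieferich search, a Cassels-style lower bound on $|z_2|$ combined with a Laurent-Mignotte lower bound for linear forms in two (possibly $p$-adic) logarithms applied to $z_1^p - D z_2^p = 1$ should close the gap.

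The main obstacle, as I see it, is the descent step itself: promoting the ideal equation to an element equation requires very precise information about the $p$-part of the minus class group of $\Q(\zeta_p)$, and the extra factor $D$ — absent in the classical Catalan setting $D = 1$ — forces one either to extend Mih{\u{a}}ilescu's vanishing theorems for minus components to accommodate $D$-support, or to argue separately that the $D$-contribution $\delta$ must itself be a $p$th power up to the cyclotomic units already available. Once the descent is in place, everything downstream (the Wieferich congruence, the small-$p$ search, and the final linear-forms-in-logarithms estimate that yields the bound $163\cdot 10^{12}$) is essentially a careful but routine computation.
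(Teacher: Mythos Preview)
The paper does not prove this theorem at all: it is quoted verbatim as \cite[Theorem~1.3]{bartmihai} and then used as a black box to reduce the remaining cases to finitely many small primes. There is therefore no ``paper's own proof'' to compare against; the only thing the paper contributes here is the citation.

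As for your sketch, it is a reasonable high-level description of the cyclotomic strategy one expects from Bartolom\'e and Mih{\u{a}}ilescu, and you have correctly identified where the real work lies (promoting the ideal equation to an element equation in the presence of the extra $D$-support, and then squeezing out the Wieferich-type congruence). But as written it is an outline, not a proof: you explicitly defer the descent step to unspecified extensions of Mih{\u{a}}ilescu's minus-part vanishing results, and the endgame (``a Laurent--Mignotte lower bound \ldots\ should close the gap'') is asserted rather than carried out. For the purposes of the present paper none of this matters, since the result is simply imported; if you actually want to prove it, you will have to work through \cite{bartmihai} and fill in precisely the steps you flagged as obstacles.
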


Here, $\varphi$ denotes Euler's totient function, and $\mathrm{Rad}(\varphi(D))$ denotes the product of all primes dividing $\varphi(D)$. We note that the theorem stated in \citep{bartmihai} goes on to give further constraints in the case $p > 163 \cdot 10^{12}$.

Since the prime bound obtained in Proposition \ref{LogBound} is (much) smaller than $163 \cdot 10^{12}$ in each case, Theorem \ref{ThmD} reduces our problem to only needing to consider finitely many small primes in each case; namely the odd prime factors of $\varphi(D)$.

When $ p = 3$, $5$, or $7$ occurs as a factor of $\varphi(D)$, we found it simplest to directly solve the relevant Thue equations with \texttt{Magma}. For $p \geq 11$, in the cases when $D$ has at most two prime factors and its largest prime factor is $<30$, we may apply \citep[Theorem~1]{GP} to immediately rule out the existence of solutions. In the case $(m, D, p) = (49, 2^{10} \cdot 3 \cdot 11^{10}, 11)$ we directly solve the corresponding Thue equation with \texttt{Magma} (although this case could also be dealt with using similar techniques to those we present below). 

For each remaining case, $\ord_2(D) = 1$ and we will now employ some of the local arguments of Section 4 in combination with the modular method. We start by seeing how one may associate, following standard recipes (see \citep{ppp} for example), a Frey curve to equation (\ref{eqD}). We rewrite equation (\ref{eqD}) as \begin{equation}\label{ppp} -1 - Dz_2^p +z_1^p = 0,  \end{equation} and we assume that $p \geq 11$ and $\ord_2(D) = 1$, since this will be our set-up. The Frey curve we associate to this equation is \[E: \; Y^2 = X(X+1)(X-Dz_2^p). \] The conductor, $N$, of $E$ is then given by \[ N = \begin{cases} 2 \cdot  \mathrm{Rad}_2(Dz_1z_2) & \text{ if } \; \; 2 \mid z_2, \\ 2^5 \cdot  \mathrm{Rad}_2(Dz_1z_2) & \text{ if }  \; \;2 \nmid z_2.  \end{cases} \] Here, $\mathrm{Rad}_2(Dz_1z_2)$ denotes the product of all \emph{odd} primes dividing $Dz_1z_2$. We write $\overline{\rho}_{E,p}$ for the mod $p$ Galois representation of $E$. Applying standard level-lowering results, we obtain that \[ \overline{\rho}_{E,p} \sim \overline{\rho}_{f,\mathfrak{p}}, \] for $f$ a newform at level $N_p$, where \[N_p = \begin{cases} 2 \cdot \mathrm{Rad}_2(D) & \text{ if } \; \; 2 \mid z_2, \\ 2^5 \cdot  \mathrm{Rad}_2(D) & \text{ if }  \; \;2 \nmid z_2,  \end{cases} \] and $\mathfrak{p}$ a prime above $p$ in the coefficient field of $f$.

We are now in a position to complete the proof of Theorem \ref{Mainthm}.

\begin{proof}[Proof of Theorem \ref{Mainthm}] It remains to deal with the odd primes dividing $\varphi(D)$ in the remaining cases. Suppose we are in one of these cases, and let $(y_1,y_2,y_3)$ be a non-zero solution to the system (\ref{system}) of Thue equations. By rewriting $y_i^p$ as $-(-y_i)^p$ if necessary, we obtain an equation of the same form as (\ref{ppp}). As described above, we attach a Frey curve $E$ to this equation, and level lower so that $\overline{\rho}_{E,p} \sim \overline{\rho}_{f,\mathfrak{p}}$, for $f$ a newform at level $2 \cdot \mathrm{Rad}_2(D)$ or $2^5 \cdot \mathrm{Rad}_2(D)$.

 Now, if $\ell \mid y_1y_2$ is a prime, then it must be a prime of multiplictive reduction for $E$, and by comparing traces of Frobenius, we have \[\ell+1 \equiv \pm c_\ell(f) \pmod{\mathfrak{p}},\] where $c_\ell(f)$ denotes the $\ell$th Fourier coefficient of the newform $f$. It follows that\begin{equation}\label{traces} p \mid \mathrm{Norm}((\ell+1)^2 - c_\ell(f)^2) \end{equation}

We now search for a prime $\ell \nmid D$ with $\ell \equiv 1 \pmod{p}$, for which the system of Thue equations (\ref{system}) has a unique solution  mod ${\ell}$, \emph{and} for which (\ref{traces}) does not hold. If the system has a unique solution mod $\ell$, then this solution must be the reduction mod $\ell$ of the known global solution, for which $y_1y_2 = 0$, so either $y_1 \equiv 0 \pmod{\ell}$ or   $y_2 \equiv 0 \pmod{\ell}$. So $\ell \mid y_1y_2$, and we have therefore obtained a contradiction if (\ref{traces}) does not hold. For each newform $f$ in each case we were able to find such a prime $\ell$, apart from the cases listed in Table \ref{TabSturm}.

For the remaining newforms in Table \ref{TabSturm}, we find that for any prime $q \nmid 2D$ that we test, \begin{equation}\label{strm} p \mid \mathrm{Norm}(q+1 - c_q(f)). \end{equation} This suggests that the representation $\overline{\rho}_{f,\mathfrak{p}}$ is reducible, which would be a contradiction. We proceed by applying \citep[Proposition~2.2]{tripexp} to the newform $f$. We obtain that $p \mid \# E(\mathbb{F}_q)$ for any prime $q \nmid D$, and so $E$ must have a rational subgroup of order $p$, a contradiction since $p \geq 11$.
\begingroup 
\renewcommand*{\arraystretch}{1.5}
\begin{table}[ht!]
\begin{center} \small
\begin{tabular}{ |c|c|c| } 
 \hline
 $m$ & $p$ & $f$  \\
 \hline 
 $15$ & $11$ & 138.2.a.d \\ \hline 
 $27$ & $23$ & 282.2.a.e \\ \hline
 $28$ & $11$ & 138.2.a.d \\ \hline
 $30$ & $13$ & 318.2.a.g \\ \hline
 $33$ & $29$ & 354.2.a.h \\ \hline
 $37$ & $11$ & 402.2.a.g \\ \hline
 $43$ & $13$ & 474.2.a.e \\ \hline
 $45$ & $41$ & 498.2.a.g \\ \hline
 $48$ & $11$ & 534.2.a.f \\ \hline 
\end{tabular}
\caption{\label{TabSturm}\normalsize Remaining newforms. We use the notation of the LMFDB \citep{lmfdb}.}
\end{center}
\end{table} 
\endgroup
\end{proof}

\bibliographystyle{plainnat}

\Addresses

\end{document}